\documentclass[letterpaper, 10 pt, times, conference]{ieeeconf}

\IEEEoverridecommandlockouts                              

\overrideIEEEmargins   

\usepackage{graphicx} 
\usepackage{amsmath,amssymb,amsfonts}
\usepackage{mathtools}
\usepackage{commands}
\usepackage{xcolor}
\usepackage{algorithm, algpseudocode}
\usepackage{dsfont}

\usepackage{tikz}
\usetikzlibrary{arrows.meta, decorations.markings}

\usepackage{booktabs}

\usepackage{caption}
\usepackage{subcaption}

\pdfminorversion=4
\pdfinclusioncopyfonts=1

\newtheorem{thm}{Theorem}[section]
\newtheorem{prop}[thm]{Proposition}

\definecolor{natcolour}{HTML}{E57A77}

\title{\LARGE \bf A Sequential Operator-Splitting Framework for Exploration of Nonconvex Trajectory Optimization Solution Spaces}
\author{{Justin Ganiban$^{1}$, Natalia Pavlasek$^{1}$, \behcetacikmese$^{1}$}
\thanks{$^{1}$William E. Boeing Department of Aeronautics
and Astronautics, University of Washington, Seattle, WA 98195. E-mail: {\tt\small jganiban, pavlasek, behcet (at) uw (dot) edu}.}}

\begin{document}

\maketitle
\begin{abstract}
    Trajectory optimization methods provide an efficient and reliable means of computing feasible trajectories in nonconvex solution spaces. However, a well-known limitation of these algorithms is that they are inherently local in nature, and typically converge to a solution in the neighborhood of their initial guess. This paper presents a sequential operator-splitting framework, based on the alternating direction method of multipliers (ADMM), aimed at promoting exploration within the sequential convex programming (SCP) framework. In particular, diverse initial solutions are modeled as agents within the consensus ADMM framework. Driving these agents toward consensus facilitates exploration of the nonconvex optimization landscape. Numerical simulations demonstrate that the proposed method consistently yields equivalent or lower-cost solutions compared to the standard SCP approach, with the same number of or fewer agents.
\end{abstract}

\section{Introduction}

Trajectory optimization methods are widely employed in modern guidance, navigation, and control (GNC) systems for their computational efficiency and convergence guarantees~\cite{Malyuta_CSM, Bonalli_gusto}. Though GNC problems are almost always nonconvex, convex optimization-based trajectory generation methods, such as sequential convex programming (SCP), have proven to be effective in solving several real-world problems, such as autonomous drone guidance, spacecraft rendezvous and docking, and planetary landing~\cite{Malyuta_CSM, acikmese_lcvx}.
Convex optimization-based algorithms are typically guaranteed to converge to a stationary point, but since they are local methods, their solution is highly dependent on the initial guess~\cite{Elango_ctcs, bryson2018applied}.
It follows that trajectory optimization algorithms generally produce suboptimal solutions in the neighborhood of their initial guess.

Sampling-based methods present an alternative to trajectory-optimization methods. Methods such as RRT~\cite{lavalle1998_rrt}, A$^*$~\cite{hart1968_astar}, probabilistic roadmaps~\cite{kavraki2002_probroadmaps}, among others, incrementally generate discrete points by sampling from a distribution to build a solution-space exploring tree~\cite{lavalle2006planning}. Violations of constraints are detected and used to guide sampling. The exploratory nature of sampling-based methods helps mitigate the challenge posed by local minima. However, these methods generally do not scale well with problem dimension. Moreover, they do not always consider optimality, and those that do require infinitely many samples to guarantee optimality~\cite{orthey2023_sampling}.

Multi-start optimization has been used to promote exploration of a nonconvex landscape by solving multiple optimization problems with differing initializations~\cite{Martí_Resende_Ribeiro_2013, resende2016_grasp}. Diverse initial solutions are used to promote exploration of different regions of the search space. This method, while effective in combinatorial optimization applications, can lead to large computational overhead in trajectory optimization applications since each initialization explores the landscape in isolation, and information about basins of attraction is not shared, leading to independent redundant convergence to the same local minimum~\cite{denobel_2024}.

Another technique for escaping local minima within trajectory optimization is to perturb the gradient with noise, helping the optimizer escape saddle points~\cite{Kalakrishnan_Chitta_Theodorou_Pastor_Schaal_2011}. However, the injected noise can produce trajectories that violate feasibility, making the method unsuitable for safety-critical applications~\cite{Kalakrishnan_Chitta_Theodorou_Pastor_Schaal_2011}. Moreover, this method is computationally intensive due to the large number of noisy trajectories simulated.

The augmented Lagrangian method is widely used for solving constrained optimization problems via a series of unconstrained problems. The alternating direction method of multipliers (ADMM) is a variant of the augmented Lagrangian scheme widely used for distributed and convex optimization, particularly due to its ability to handle problems with a separable structure efficiently~\cite{MAL-016, Hosseini_Chapman_Mesbahi_2015, ryu2022_book}. This ability to break down a complex problem is formalized by the concept of \emph{operator splitting}, which is a mathematical framework for solving problems involving the sum of multiple terms by iteratively solving subproblems that handle each term independently.

ADMM has been used in the field of optimal control for exploiting structure in the context of model predictive control~\cite{kang2015_mpc_admm, rey2017_mpcadmm, rey2020_mpcadmm}. The problems solved by each agent are convex optimal control problems. 
In the context of nonconvex optimization, operator splitting was used to parallelize trajectory optimization for speed improvements in~\cite{Wang2021_trajsplitting}. In particular, ADMM was employed to temporally split a trajectory optimization problem so that agents could solve trajectory optimization problems with fewer discrete samples in parallel.
A similar strategy was used to simultaneously compute optimal trajectories and time-varying linear feedback control policies in~\cite{Vikas2017_Sequential_operator_splitting}.

\begin{figure*}
    \centering
    \begin{subfigure}[b]{0.49\linewidth}
    \centering
    \includegraphics[width=0.65\linewidth]{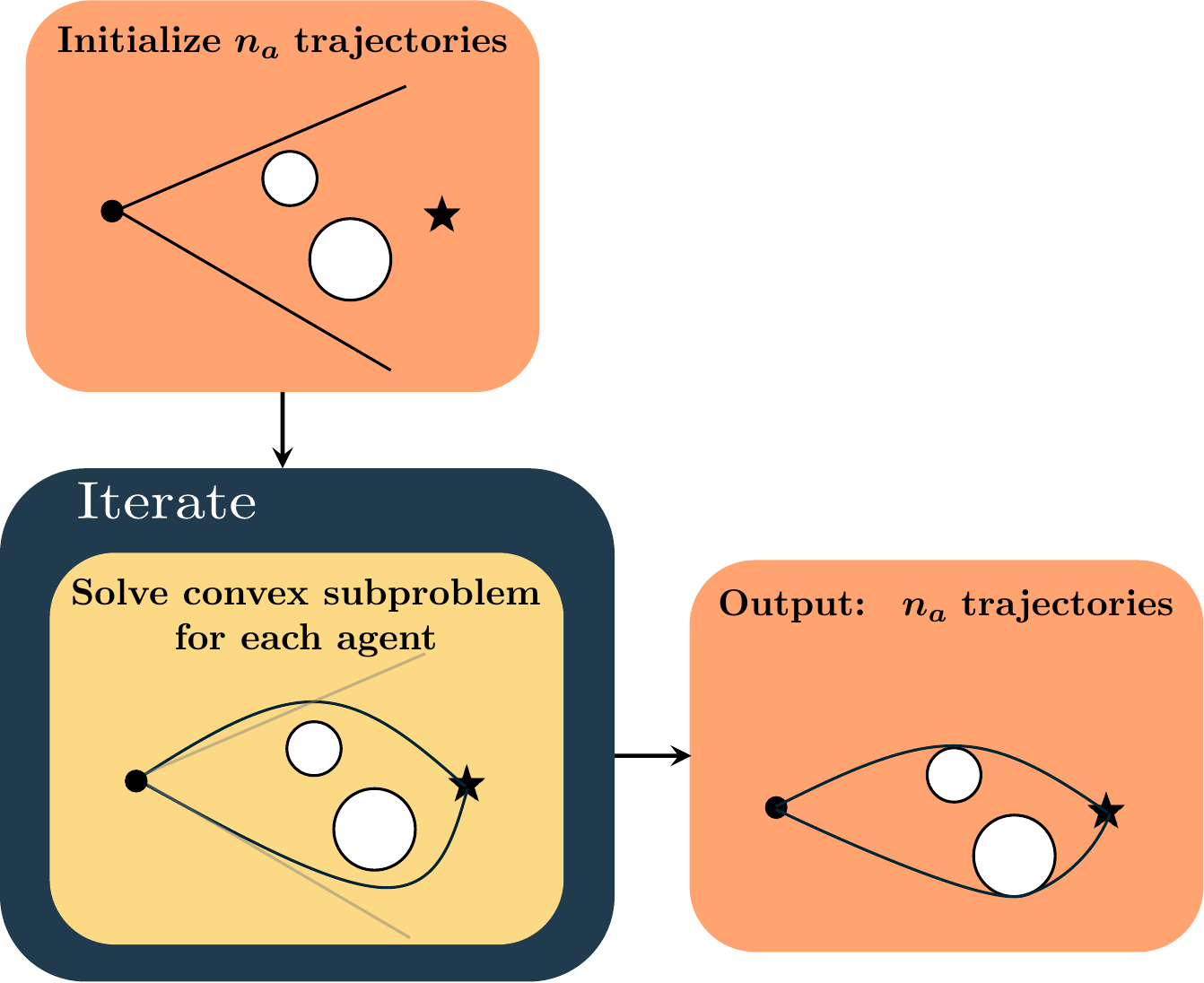}
    \caption{Standard SCP.}
    \label{fig:scp_overview}
    \end{subfigure} \hfill
    \begin{subfigure}[b]{0.49\linewidth}
    \centering
    \includegraphics[width=\linewidth]{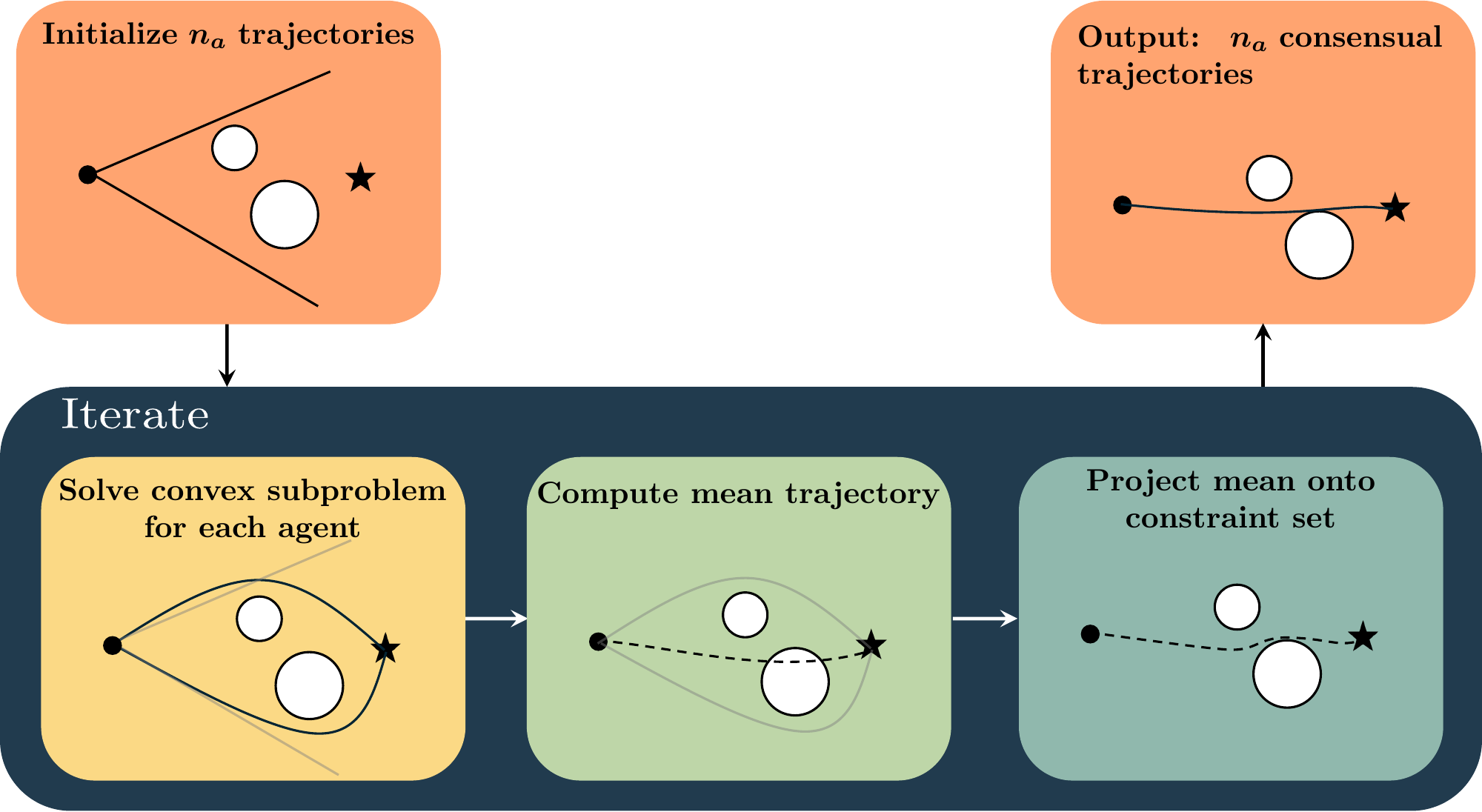}
    \caption{OS-SCP}
    \label{fig:osscp_overview}
    \end{subfigure}
    \caption{Overview of standard SCP and OS-SCP. Initial state is represented by a black circle and goal state by black star. Two obstacles are shown in white.}
    \label{fig:method_overview}
\end{figure*}

This paper leverages operator splitting to promote exploration in SCP. This is achieved by constructing a population of agents with diverse initializations that each solve for locally optimal solutions and collectively form a consensus using the ADMM framework. This provides a structured framework for promoting exploration of a shared, nonconvex solution landscape. By initializing agents with diverse initial guesses, the agents are drawn to different regions of the solution space. Then, the agents are pulled towards a consensus, allowing the agents to escape stationary points where standard gradient-based methods would stagnate. We show that this process leads to increased exploration of the solution space over multi-start methods, enabling agents to converge to solutions about which no trajectories were initialized. We demonstrate empirically that the proposed exploration-focused operator splitting method escapes local stationary points in nonconvex trajectory optimization problems where gradient-based methods fail. An overview of the standard SCP approach and the proposed Operator-Splitting SCP (OS-SCP) approach is shown in Figure~\ref{fig:method_overview}.

The remainder of this paper is structured as follows. Section~\ref{sec:background} introduces concepts used in the exploratory OS-SCP algorithm. Section~\ref{sec:algorithm} introduces the proposed algorithm, and numerical results are presented in Section~\ref{sec:results}. Finally, concluding remarks are given in Section~\ref{sec:conclusion}.

\textbf{Notation}. The following notation is used throughout this work. We denote the set of functions that are $n-$times differentiable by $\mc{C}^n$. A vector of ones of length $n$ is denoted $\mbf{1}_n$, and a vector of zeros of length $n$ is denoted $\mbf{0}_n$. We use the shorthand $[K]$ to denote the set of integers $\{0, \ldots, K\}$. The sets of real numbers are denoted by $\mbb{R}$ and positive reals by $\mbb{R}_+$, and the sets of integers and positive integers are denoted $\mbb{Z}$, and $\mbb{Z}_+$, respectively. Given a continuous-time signal $x(t)$ sampled with period $\Delta T$,
we denote the value of the signal at time instant $k\Delta T$, $k \in \mbb{Z}_+$, by $x_k = x(k \Delta T)$.

\section{Background} \label{sec:background}

This section introduces key concepts used in the proposed operator splitting SCP framework introduced in the following section.

\subsection{Consensus ADMM}
Traditionally, consensus ADMM decomposes the objective function of an optimization problem across multiple subproblems with a consensus constraint enforced through dual variables and proximal penalties. Consider the optimization problem
\begin{equation}
    \min_{\pvar}~\sum_{i} p_i(\pvar) + q(\pvar),
\end{equation}
where each $p_i : \mbb{R}^{n_\pvar}\to\mbb{R}$ is a convex term in the objective function, and $q: \mbb{R}^{n_\pvar}\to\mbb{R}$ encodes shared regularization or constraints. To apply consensus ADMM, the problem is reformulated using a splitting variable, resulting in
\begin{equation}
    \min_{\pvar_1,\ldots,z_{n_a},\bar{\pvar}}~\sum_{i}^{n_a} p_i(\pvar_i) + q(\bar{\pvar}) \quad \st \quad \pvar_i = \bar{\pvar},\quad i\in[n_a],
\end{equation}
where $\pvar_i \in \mbb{R}^{n_\pvar}$ for $i\in [n_a]$ is the state of agent $i$, and $\bar{\pvar} \in \mbb{R}^{n_\pvar}$ is the global state. Now, each $p_i(\pvar_i)$ is a convex per-agent objective, and $q(\bar{\pvar})$ represents shared regularization or constraints. The goal of consensus optimization is for each agent to eventually aggregate to a consensus such that all agents achieve the value of the global state, $\pvar_i=\bar{\pvar}$ for all $i\in[n_a]$. This is achieved through an iterative process, in which the individual agent updates are first solved in parallel over $i$, then a consensus update is performed, and finally a dual update is computed~\cite{MAL-016}, expressed as
\begin{subequations}\small \label{eq:admm}
\begin{alignat}{3}
    \pvar_i^{\itervar+1} &= \argmin_{\pvar} p_i(\pvar) + \frac{\rho}{2}\norm{\pvar -\bar{\pvar}^\itervar+\dvar_i^\itervar}^2_2,\; \label{eq:ADMM_agent_update}\\
    \bar{\pvar}^{\itervar+1}&=\argmin_{\bar{\pvar}} q(\bar{\pvar}) + \frac{\rho}{2}~\sum_{i=1}^{n_a}\norm{\pvar^{\itervar+1}_i-\bar{\pvar}+\dvar_i^\itervar}^2_2,\; &&\label{eq:ADMM_consensus_update}\\
    \dvar_i^{\itervar+1} &= \dvar_i^\itervar+(\pvar_i^{\itervar+1}-\bar{\pvar}^{\itervar+1}),\; \label{eq:ADMM_dual_update}
\end{alignat}
\end{subequations}
for $ i\in[n_a]$. Here, the primal and dual variables are represented by $\pvar_i$ and $\dvar_i\in\mbb{R}^{n_\pvar}$, respectively, $\itervar\in\mbb{Z}_+$ is the iteration variable, and $\rho \in \mbb{R}_+$ is the consensus penalty parameter.

\begin{prop}\label{prop:admm_converge}
If each $p_i$ and $q$ are convex, closed, and proper, then the consensus residuals vanish, meaning that as $\itervar \rightarrow \infty$, the primal residual 
\begin{equation}
    \norm{\delta_{r_i}^{\itervar+1}} := \norm{\pvar_i^{\itervar+1} - \bar{\pvar}^{\itervar+1}} \;\;\to\;\; 0, 
        \quad \forall i.
\end{equation}
Moreover, the dual residuals vanish as $\itervar \rightarrow \infty$ such that
\begin{equation}
    \delta_s^{\itervar+1} := \rho(\bar{\pvar}^{\itervar+1} - \bar{\pvar}^{\itervar}) \;\;\to\;\; 0.
\end{equation}
\end{prop}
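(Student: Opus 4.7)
The plan is a standard monotone Lyapunov argument for scaled-form consensus ADMM, adapted to include the shared term $q(\bar{\pvar})$. First, under convexity, closedness, and properness of $p_i$ and $q$, I would invoke the existence of a saddle point $(\pvar_i^\star, \bar{\pvar}^\star, \dvar_i^\star)$ of the Lagrangian of the reformulated problem, with $\pvar_i^\star = \bar{\pvar}^\star$ for every $i$; any such saddle point is a fixed point of \eqref{eq:admm} and will serve as the reference iterate in what follows.

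Second, I would extract the first-order optimality conditions of subproblems \eqref{eq:ADMM_agent_update} and \eqref{eq:ADMM_consensus_update}, then use the dual update \eqref{eq:ADMM_dual_update} to rewrite them as the subgradient inclusions
\[
-\rho\,\dvar_i^{\itervar+1} \in \partial p_i\!\left(\pvar_i^{\itervar+1}\right), \qquad
\rho \sum_{i=1}^{n_a} \dvar_i^{\itervar+1} \in \partial q\!\left(\bar{\pvar}^{\itervar+1}\right).
\]
Combined with the standard subgradient inequality for convex functions, evaluated at the saddle-point iterates, these give two scalar inequalities linking the current iterates to the optimality gap at $(\pvar_i^\star, \bar{\pvar}^\star, \dvar_i^\star)$.

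Third, I would introduce the Lyapunov candidate
\[
V^\itervar := \rho \sum_{i=1}^{n_a} \| \dvar_i^\itervar - \dvar_i^\star \|^2 \;+\; \rho \, n_a \, \| \bar{\pvar}^\itervar - \bar{\pvar}^\star \|^2,
\]
and verify the per-step decrease
\[
V^{\itervar+1} \;\leq\; V^\itervar \;-\; \rho \sum_{i=1}^{n_a} \| \delta_{r_i}^{\itervar+1} \|^2 \;-\; \rho\, n_a \, \| \bar{\pvar}^{\itervar+1} - \bar{\pvar}^\itervar \|^2.
\]
Since $V^\itervar \geq 0$ and non-increasing, telescoping forces $\sum_\itervar \| \delta_{r_i}^{\itervar+1} \|^2 < \infty$ and $\sum_\itervar \| \bar{\pvar}^{\itervar+1} - \bar{\pvar}^\itervar \|^2 < \infty$, so $\| \delta_{r_i}^{\itervar+1} \| \to 0$ and, by the identity $\delta_s^{\itervar+1} = \rho(\bar{\pvar}^{\itervar+1} - \bar{\pvar}^\itervar)$, also $\| \delta_s^{\itervar+1} \| \to 0$.

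The hard part will be establishing the monotone-decrease inequality itself: one must expand $\|\dvar_i^{\itervar+1} - \dvar_i^\star\|^2$ via the dual update and cancel the cross terms using both subgradient inequalities simultaneously, which is the technical core of the argument and mirrors the bookkeeping in~\cite{MAL-016}. A minor auxiliary step is verifying that the saddle-point assumption is non-vacuous, which follows under convexity since the coupling constraint $\pvar_i = \bar{\pvar}$ is a linear equality and trivially satisfies a standard constraint qualification.
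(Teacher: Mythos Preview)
Your proposal is correct and is essentially the argument the paper invokes: the paper's own proof consists solely of a citation to \cite[Appx.~A]{MAL-016}, and the Lyapunov descent argument you outline---with the candidate $V^\itervar = \rho\sum_i\|\dvar_i^\itervar-\dvar_i^\star\|^2 + \rho\,n_a\|\bar{\pvar}^\itervar-\bar{\pvar}^\star\|^2$ and the telescoping of the per-step decrease to force square-summability of the residuals---is exactly the proof given in that appendix, specialized to the consensus form with the shared regularizer $q$. There is nothing to add; you have reconstructed the cited argument faithfully.
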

\begin{proof}
    This result has been shown in~\cite[Appx.~A]{MAL-016}.
\end{proof}

Proposition~\ref{prop:admm_converge} implies that the shared consensus variable has stabilized. For nonconvex problems, ADMM is often observed to converge to a stationary point under certain assumptions, but global optimality is not guaranteed, see~\cite{wang2019_nonconvexadmm} for additional details.

\subsection{Sequential Convex Programming}

Consider the discrete-time dynamics
\begin{align}
    x_{k+1} = f_k(t_k, x_k, u_k),\quad&k\in[K-1],
\end{align}
where $x_k\in\mbb{R}^{n_x}$ is the state, $u_k\in\mbb{R}^{n_u}$ is the control input, and $f_k:\mbb{R}^{n_x}\times\mbb{R}^{n_u} \to \mbb{R}^{n_x}$ are the discrete-time system dynamics between times $t_k$ and $t_k + k\Delta t$. We introduce the variable
\begin{align}
    \pvar_k = \bma x_k^\top & u_k^\top \ema^\top,
\end{align}
where $\pvar_k\in\mbb{R}^{n_\pvar} = \mbb{R}^{n_x + n_u}$. Note that for ease of notation, we let $u_K = \mbf{0}_{n_u}$. Moreover, we let $z = z_{0:K}$, where $z\in\mbb{R}^{n_z\times K}$.
The optimal control problem is
\begin{subequations}\label{eq:noncvx_ocp}
    \begin{alignat}{5}
        &&\min_{\pvar}~&J_K(\pvar_K) + \sum_{k=0}^{K-1} J_k(\pvar_k)&&\\
        &&\st~&e^x \pvar_{k+1} = f_k(\pvar_k),\,&&k\in[K-1],\\
        &&&g(\pvar_k)\leq 0,\,&&k\in[K],\\
        &&&h(\pvar_k) = 0,\,&&k\in[K],\\
        &&&\pvar_k\in\mc{Z}^c_k,\,&&k\in[K],
    \end{alignat}
\end{subequations}
where $J_k:\mbb{R}^{n_\pvar}\to\mbb{R}$ is the running cost function, $J_K:\mbb{R}^{n_\pvar}\to\mbb{R}$ is the terminal cost function, $e^x\in\mbb{R}^{n_\pvar}$ is a row matrix that extracts $x$ from $\pvar$, i.e.~$e^x = \bma \mbf{1}_{n_x}^\top & \mbf{0}_{n_u}^\top \ema$, $\mc{Z}^c_k\subseteq\mbb{R}^{n_\pvar}$ is the set of convex state and control constraints at time step $k$, $g:\mbb{R}^{n_\pvar}\to\mbb{R}^{n_g}$ are nonconvex inequality constraints, and $h:\mbb{R}^{n_\pvar}\to\mbb{R}^{n_h}$ are nonconvex equality constraints. We assume that $J, f_k, g, h\in\mc{C}^1$ are closed, and nonconvex, and that $\mc{Z}^c_k$ for $k\in[K]$ is closed and convex. We note that this problem can be the discretized form of a continuous-time optimal control problem and can include path constraints, see~\cite{Elango_ctcs} for details.

We apply the prox-linear method~\cite{drusvyatskiy_proxlin, szmuk_ptr, reynolds_ptr} to solve~\eqref{eq:noncvx_ocp}. For a general function $\Xi:\mbb{R}^{n_\pvar}\to\mbb{R}^{n_\Xi}$, we denote the linearized function
\begin{align*}
    \tilde{\Xi}(\bar{\pvar}, \pvar) = {\Xi}(\bar{\pvar}) + \nabla{\Xi}^\top(\bar{\pvar}) (\pvar - \bar{\pvar}),
\end{align*}
where $\bar{\pvar}$ is the state about which the function is linearized.
We can formulate~\eqref{eq:noncvx_ocp} as an unconstrained minimization problem by penalizing the nonconvex constraints~\cite{nocedalwright_2006}. The convex constraints are enforced through an indicator function. At the $(\itervar+1)^{\mrm{th}}$ iteration, the nonconvex cost and constraints are linearized about the solution to the $\itervar^{\mrm{th}}$ subproblem. We define the linear function
\begin{multline}\label{eq:lin_cost}
    \Theta(\pvar^{\itervar}, \pvar) \coloneq \tilde{J}_K(\pvar_K^{\itervar}, \pvar_K) + \sum_{k=0}^{K-1}\tilde{J}_{k}(\pvar_k^{\itervar}, \pvar_k) + \mathds{1}_{\mc{Z}^c}(\pvar)\\
        + w^{1}\sum_{k=0}^{K-1}\norm{\tilde{f}_k(\pvar_k^{\itervar}, \pvar_k)}_1
        + w^{2}\mbf{1}_{n_g}^\top \abs{\tilde{g}(\pvar^{\itervar}, \pvar)} \\
        + w^3\norm{\tilde{h}(\pvar^{\itervar}, \pvar)}_1,
\end{multline}
where $w^1, w^2, w^3 \in \mbb{R}_+$ are user-selected weights, and ${\mc{Z}^c = \bigcup_{k\in[K]} \mc{Z}^c_k}$. Note that if the running cost and terminal cost functions are convex, they need not be linearized, and $\tilde{J}_K(\pvar_K^{\itervar}, \pvar_K)$ and $\tilde{J}_k(\pvar_K^{\itervar}, \pvar_K)$ in \eqref{eq:lin_cost} are replaced by ${J}_K(\pvar_K)$ and ${J}_k(\pvar_K)$, respectively.
Since the linearizations of the cost and constraints are only accurate in the neighborhood of the trajectory about which they are performed, deviation from that trajectory is penalized with a trust region, resulting in the function
\begin{align}\label{eq:lin_cost_tr}
    \Gamma(\pvar^{\itervar}, \pvar) \coloneq \Theta(\pvar^{\itervar}, \pvar) + \frac{w^\mrm{p}}{2}\sum_{k=0}^{K}\norm{\pvar_k - \pvar_k^{\itervar}}_2^2,
\end{align}
where $w^\mrm{p}\in\mbb{R}_+$ is a user-selected proximal weight. The SCP framework solves~\eqref{eq:noncvx_ocp} by successively solving the \emph{convex subproblem}
\begin{alignat}{4}\label{eq:cvx_ocp}
    &&z^{\itervar+1} = \argmin_{\pvar}~&\Gamma(\pvar^{\itervar}, \pvar).
\end{alignat}
Between SCP iterations, the trajectory about which the problem is linearized is updated with the solution of the previous iterate. This process is repeated until a convergence criterion is satisfied. Algorithm~\ref{alg:scp} outlines a multi-start version of the standard SCP algorithm, in which $n_a$ initial trajectories are used to initialize the algorithm.
\begin{algorithm}
\caption{Multi-Start Standard SCP}
\label{alg:scp}
\begin{algorithmic}
\Require $\epsilon_c\in\mbb{R}_+,\; n^\mrm{max}\in\mbb{Z}_+,\; \pvar^0_{i} \text{ for } i\in[n_a]$
\For{$i=1,\ldots,n_a$}
\While{$\itervar \leq \itervar^{\mrm{max}}$, $\norm{\Theta (\pvar_{i}^{\itervar-1}, \pvar_{i}^{\itervar})} > \epsilon_c$}
\State $\pvar_{i}^{\itervar+1} \leftarrow \argmin_{\pvar} \Gamma(\pvar_{i}^{\itervar}, \pvar)$
\State $\itervar \leftarrow \itervar + 1$
\EndWhile
\EndFor
\end{algorithmic}
\end{algorithm}

\section{Exploratory SCP Algorithm} \label{sec:algorithm}

The solution found using standard SCP is a stationary point of~\eqref{eq:noncvx_ocp} in the neighborhood of the initial trajectory, $\pvar^0$~\cite{Elango_ctcs, drusvyatskiy_proxlin}. SCP solutions are therefore very sensitive to their initialization. In this section, we propose a method for exploring the nonconvex solution space of a problem using a modified SCP algorithm, referred to as OS-SCP.

\subsection{Operator-Splitting SCP (OS-SCP)}

The trust region term in~\eqref{eq:lin_cost_tr} promotes validity of the linearized model by penalizing deviation from the trajectory about which the problem was linearized. However, this impedes exploration of the nonconvex solution space and can prevent the algorithm from exiting a local minimum and finding a lower-cost solution. We therefore modify the conventional penalized linearization in~\eqref{eq:lin_cost_tr}.

\subsubsection{Primal update}
We use $n_a$ virtual agents to explore the solution space of an optimal control problem through iterative solves of~\eqref{eq:admm}. The OS-SCP algorithm begins by solving~\eqref{eq:ADMM_agent_update} with $p_i(z_i) = \Theta(z_i^j, z_i)$ for each agent, which is analogous to a standard SCP iteration, given by~\eqref{eq:lin_cost_tr}, with a modified penalty term. The subproblem can be expressed as
\begin{align}\label{eq:osscp_primalupdate}
    \pvar_i^{\itervar+1} &= \argmin_{\pvar}~\Theta(z_i^j, z) + \frac{\rho}{2} \norm{\pvar -\bar{\pvar}^\itervar+\dvar_i^\itervar}^2_2,\; &&i\in[n_a]
\end{align}
where $z_i$ is the trajectory of the $i^\mrm{th}$ agent, $z_i^j$ is the solution to the $j^\mrm{th}$ convex subproblem for the $i^\mrm{th}$ agent, and $\rho$ is the consensus penalty parameter. We define the penalized linearized cost as the objective in~\eqref{eq:osscp_primalupdate} so that
\begin{multline}
    \Gamma^c(\pvar^{\itervar}_i, \bar{\pvar}^{\itervar}, \xi_i^j, \pvar) = \Theta(z_i^j, z) + \frac{\rho}{2}\norm{\pvar -\bar{\pvar}^\itervar+\dvar_i^\itervar}^2_2.
\end{multline}
The modified penalty term penalizes deviation of the state of each agent from the consensus state, $\bar{\pvar}$. This promotes the formation of a consensus among the agents.

\subsubsection{Consensus update}
We begin by defining the convexified constraint set
\begin{align}
    \mc{Z} = \mc{Z}^c \cap \mc{Z}^n,
\end{align}
where $\mc{Z}\subseteq\mbb{R}^{n_\pvar\times K}$, and $\mc{Z}^n\subseteq\mbb{R}^{n_\pvar\times K}$ is the set of convexified nonconvex constraints, linearized about the mean trajectory, $\hat{\pvar}^{\itervar} = \frac{1}{n_a}\sum_{i=1}^{n_a} (\pvar_i^{\itervar})$. The set of convexified nonconvex constraints is
\begin{multline}
    \mc{Z}^n = \{\pvar \mid \tilde{g}(\hat{\pvar}_k^\itervar, \pvar_k) \leq 0, \tilde{h}(\hat{\pvar}_k^\itervar, \pvar_k) = 0,\; k \in[K], \\{\pvar}_{k+1} = \tilde{f}_k(\hat{\pvar}_k^\itervar, \pvar_k), \; k\in[K-1] \}.
\end{multline}

We wish to obtain a consensus trajectory that is feasible with respect to the primal problem. We therefore define $q(\bar{z})$  in~\eqref{eq:ADMM_consensus_update} as the indicator function of the convex set $\mc{Z}$, resulting in
\begin{align}\label{eq:osscp_consensusupdate}
    \bar{\pvar}^{\itervar+1}&=\argmin_{\bar{\pvar}} \mathds{1}_\mc{Z}(\bar{\pvar}) + \frac{\rho}{2}~\sum_{i=1}^{n_a}\norm{\pvar^{\itervar+1}_i-\bar{\pvar}+\dvar_i^\itervar}^2_2,\\
    &\text{where} \  \mathds{1}_\mc{Z}(\bar{\pvar}) = 
    \begin{cases}
        0 & \text{if } \bar{\pvar}\in \mc{Z}\\
        +\infty & \text{if } \bar{\pvar} \notin \mc{Z}
    \end{cases}.\nonumber
\end{align}
Equation~\eqref{eq:osscp_consensusupdate} is solved by projecting the mean of the agent's trajectories onto the convexified constraint set, resulting in
\begin{align}
    \bar{\pvar}^{j+1} = \frac{\rho}{2}~
    \Pi_{\mathcal{Z}}\!\left(
        \frac{1}{n_a}\sum_{i=1}^{n_a} (\pvar_i^{\itervar+1} + \xi_i^{j})
    \right),
\end{align}
where $\Pi_{\mathcal{Z}}(\cdot)$ denotes the Euclidean projection operator,
\begin{align}
    \Pi_{\mathcal{Z}}(v) := \argmin_{\bar{\pvar} \in \mathcal{Z}} \norm{\bar{\pvar} - v}_2^2,
\end{align}
and $\dvar_i^{\itervar}$ is the dual variable for agent $i$ at the $\itervar^\mrm{th}$ OS-SCP iteration.

\subsubsection{Dual update}
Finally,~\eqref{eq:ADMM_dual_update} is solved to update the dual variables.

The algorithm is outlined in Algorithm~\ref{alg:os_scp}. The agents begin with different initializations. The iterations outlined above are performed until the primal and dual variables converge below a specified tolerance.

\begin{algorithm}
\caption{Operator-Splitting SCP}
\label{alg:os_scp}
\begin{algorithmic}
\Require $\epsilon_r,\; \epsilon_s, \; \epsilon_c\in\mbb{R}_+,\; j^\mrm{max}\in\mbb{Z}_+,\; \pvar^0_{i} \text{ for } i\in[n_a]$
\While{$\left(\norm{\delta_r^{j}} > \epsilon_r, \; \delta_s^j > \epsilon_s,\;  j \leq j^{\mrm{max}},\; \norm{\Theta (\bar{\pvar}^{\itervar-1}, \bar{\pvar}^{\itervar})} > \epsilon_c\right)$}
\For{$i=1,\ldots,n_a$}
\State $\pvar_{i}^{\itervar+1} \leftarrow \argmin_{\pvar} \Gamma^c (\pvar_{i}^{\itervar}, \bar{\pvar}_{i}^{\itervar}, \xi_i^j, \pvar)$
\EndFor
\State $\bar{\pvar}^{\itervar+1} \leftarrow \frac{\rho}{2}~\Pi_{\mathcal{Z}}\!\left(\frac{1}{n_a} \sum_{i=1}^{n_a} \br{\pvar_i^{\itervar+1} + \dvar_i^{\itervar}}\right)$
\State $\dvar_i^{\itervar+1} \leftarrow \dvar_i^{\itervar} + \br{\pvar_i^{\itervar+1} - \tilde{\pvar}^{\itervar+1}}$
\State $\itervar \leftarrow \itervar + 1$
\EndWhile
\end{algorithmic}
\end{algorithm}

While we describe the method using consensus across all solution variables for ease of notation, the algorithm does not rely on this assumption. Consensus can be restricted to a subset of the state variables, which may be preferable when only certain portions of the state space pose challenges with local minima and consensus elsewhere is unnecessary.

\section{Numerical Results} \label{sec:results}
We now demonstrate the performance of the exploration-focused operator-splitting SCP method against standard SCP in two illustrative examples: a simple obstacle avoidance example, and an obstacle avoidance example over a landscape with a non-uniform cost field.

\subsection{Unicycle Trajectory Optimization}
Consider the discrete-time kinematic model for a unicycle with constant velocity
\begin{equation}
    x_{k+1} = f_k(t_k,{x}_k,u_k) = \begin{bmatrix}
        r^x_k+v\cos \theta_k \Delta t \\
        r^y_k+v\sin \theta_k \Delta t \\
        \theta_k + u_k \Delta t
    \end{bmatrix},
\end{equation}
where the state is $x_k = [r^x_k \quad r^y_k \quad \theta_k]^\top \in \mathbb{R}^3$ and control $u_k \in \mathbb{R}$ is the yaw rate. The speed is defined as a constant $v > 0$, and the dynamics are discretized with the uniform time step $\Delta t$. We aim to find an optimal trajectory and nominal control to bring the vehicle from an initial state to a desired terminal state, while avoiding obstacles. The problem setup is shown in Figure~\ref{fig:uni_landscape}.
\begin{figure}
    \centering
    \includegraphics[width=\linewidth]{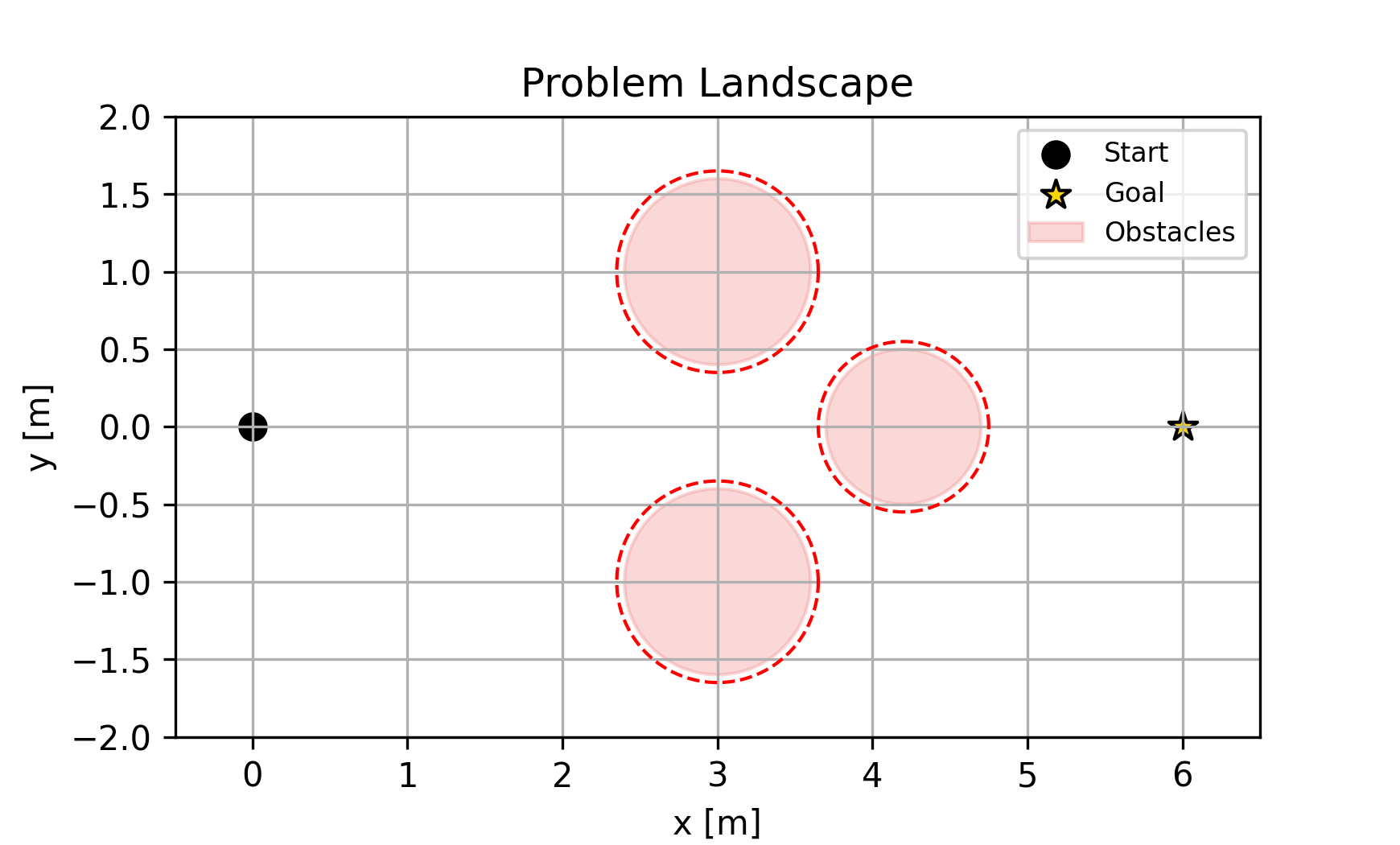}
    \caption{Problem setup for simple obstacle avoidance example.}
    \label{fig:uni_landscape}
\end{figure}

Let ${x}_g$ be the desired terminal state. We express the problem in the form of \eqref{eq:noncvx_ocp},
where the convex terminal and running cost functions are
\begin{subequations}\label{eq:uni_cost}
\begin{align}
    J_K(\pvar_K) &= (e^x \pvar_K - {x}_g)^\top Q_{g} (e^x \pvar_K - {x}_g), \\ 
    J_k(\pvar_k) &= \norm{e^u z_k}^2_2 ,
\end{align}
\end{subequations}
with $Q_{g} \succeq 0$. The nonconvex inequality constraints are 
\begin{align}
    g_\iota(\pvar_k) = R_\iota - \norm{e_r \pvar_k - c_\iota} \leq 0, \; \iota\in\{1, 2, 3\},
\end{align}
where $R_\iota \in \mbb{R}_+$ is the radius of obstacle $\iota$ and $c_\iota\in\mbb{R}^2$ is the center of obstacle $\iota$.

Following the prox-linear methodology, at iteration $\itervar$, we formulate the nonconvex optimization problem as a convex unconstrained minimization problem by penalizing the constraints, and linearizing the cost and constraints about the solution to the $(\itervar-1)^\mrm{th}$ subproblem. We formulate the penalized cost as in~\eqref{eq:lin_cost_tr} with linearized cost from~\eqref{eq:lin_cost}. Note that since the terminal and running costs are convex, they need not be convexified, and $\tilde{J}_K(\pvar_K^j, \pvar_K)$ and $\tilde{J}_k(\pvar_k^j, \pvar_k)$ in~\eqref{eq:lin_cost} are replaced by ${J}_K(\pvar_K)$ and ${J}_k(\pvar_k)$, respectively. 

\subsubsection{SCP solution}
We first solve this problem with the standard SCP method according to Algorithm~\ref{alg:scp} with three initial guesses. The first guess is an arc veering toward the left of the vehicle, the second is a straight line from initial state to the goal, and third is an arc veering toward the right of the vehicle. The initial guesses and the converged trajectories are shown in Figure~\ref{fig:uni_single_agent}, and the corresponding final costs and total iterations for each solve are shown in Table~\ref{table:uni_scp}. From Figure~\ref{fig:uni_single_agent}, it is evident that the SCP solution for each initialization converges to a solution in the neighborhood of the corresponding initial guess. This demonstrates the algorithm's difficulty in escaping the stationary point near its initialization. Additionally, Table~\ref{table:uni_scp} shows that only the solution with a straight initial guess converges to a minimum cost solution. This dependence on a good initial guess is a known limitation of SCP. In this visually intuitive example, a straight line guess might be an obvious choice. However, generating good (and especially dynamically feasible) initial guesses is generally not trivial. From this example, the need for an exploration focused, SCP-based algorithm is clearly evident.

\begin{figure}
    \centering
    \includegraphics[width=\linewidth]{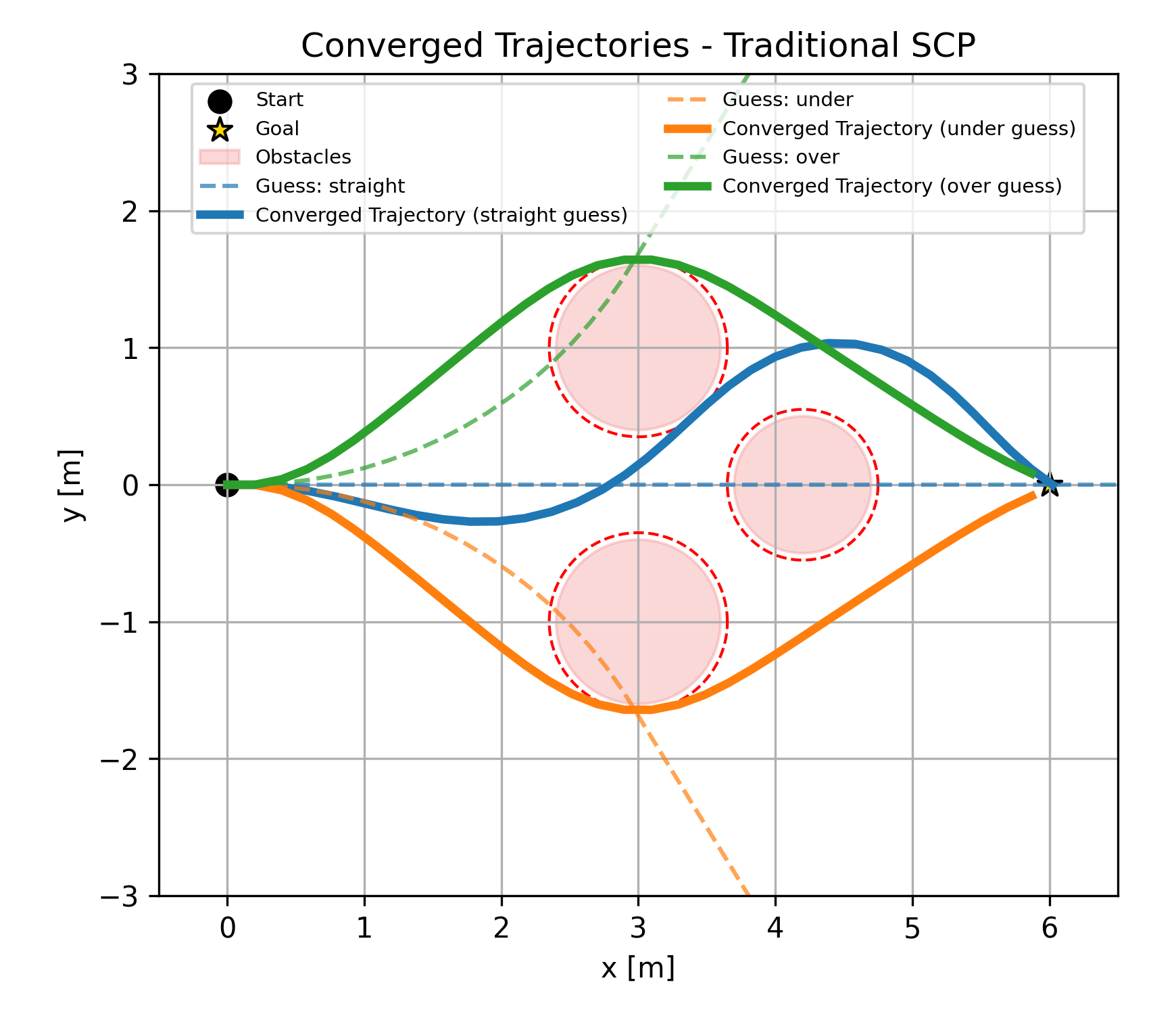}
    \caption{Initial and converged trajectories computed using standard SCP algorithm.}
    \label{fig:uni_single_agent}
\end{figure}

\begin{table}
\caption{SCP Results}
\centering
\begin{tabular}{ p{2cm} p{1.5cm}p{1.5cm}  }
 \toprule
 Guess Direction & Cost & Iterations\\
 \midrule
 Over  & 0.230 & 27 \\
 Straight   & 0.155 & 36 \\
 Under &   0.230  & 27\\
 \bottomrule
\end{tabular}
\label{table:uni_scp}
\end{table}

\subsubsection{OS-SCP solution}
We now solve the problem using the proposed OS-SCP method, summarized in Algorithm~\ref{alg:os_scp}. At each iteration, each agent solves a convex subproblem. A consensus update is then performed, and finally a dual update is performed. This process repeats until the primal and dual residuals fall below specified tolerances $\epsilon_r\in\mbb{R}_+$ and  $\epsilon_s\in\mbb{R}_+$, respectively. The same three initial guesses as before are used to initialize the three agents, where agent 1 is assigned the ``upper" guess, agent 2 the ``straight" guess, and agent 3 the ``lower" guess. The converged solution is shown in Figure~\ref{fig:uni_admm}, and the evolution of the primal and dual residuals are shown in Figure~\ref{fig:p_d_residuals}. The residuals converge to zero over iterations, and OS-SCP successfully forms a consensus between all three agents. The consensus converges to the same trajectory as the lowest cost solution of the standard SCP example, demonstrating the ability for the OS-SCP method to pull agents out of the local minima near to their initialization. We compare the performance of both methods in Table~\ref{table:uni_admm}, where OS-SCP finds an equal cost trajectory in fewer iterations than standard SCP. Since at each iteration of OS-SCP, three convex subproblems are solved (one per agent), and three standard SCP problems are solved (one per initial guess), the run times of the methods are approximately equal. 

\begin{figure}
    \centering
    \includegraphics[width=\linewidth]{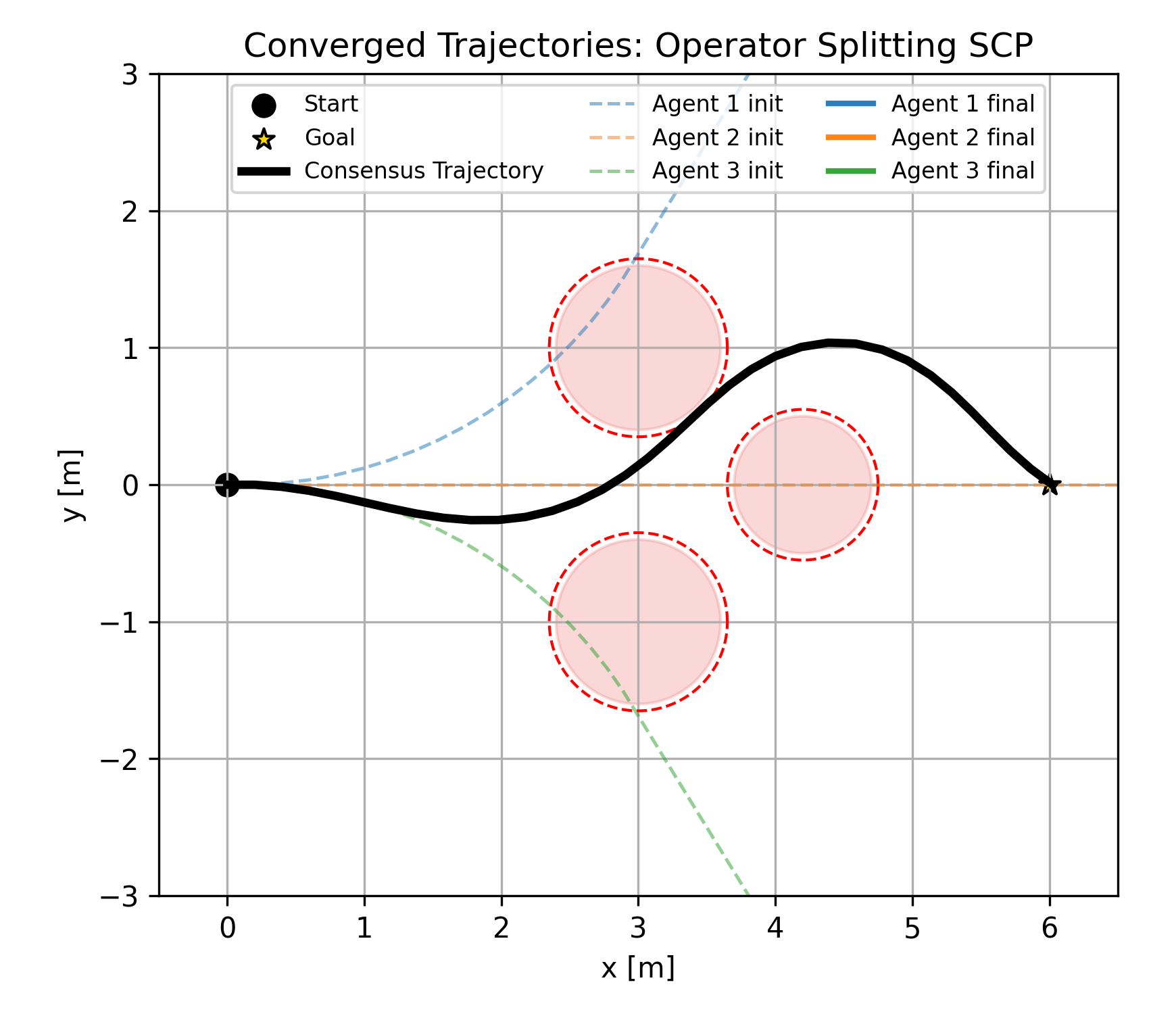}
    \caption{Initial and converged trajectories computed using OS-SCP algorithm.}
    \label{fig:uni_admm}
\end{figure}

\begin{figure}
    \centering
    \includegraphics[width=\linewidth]{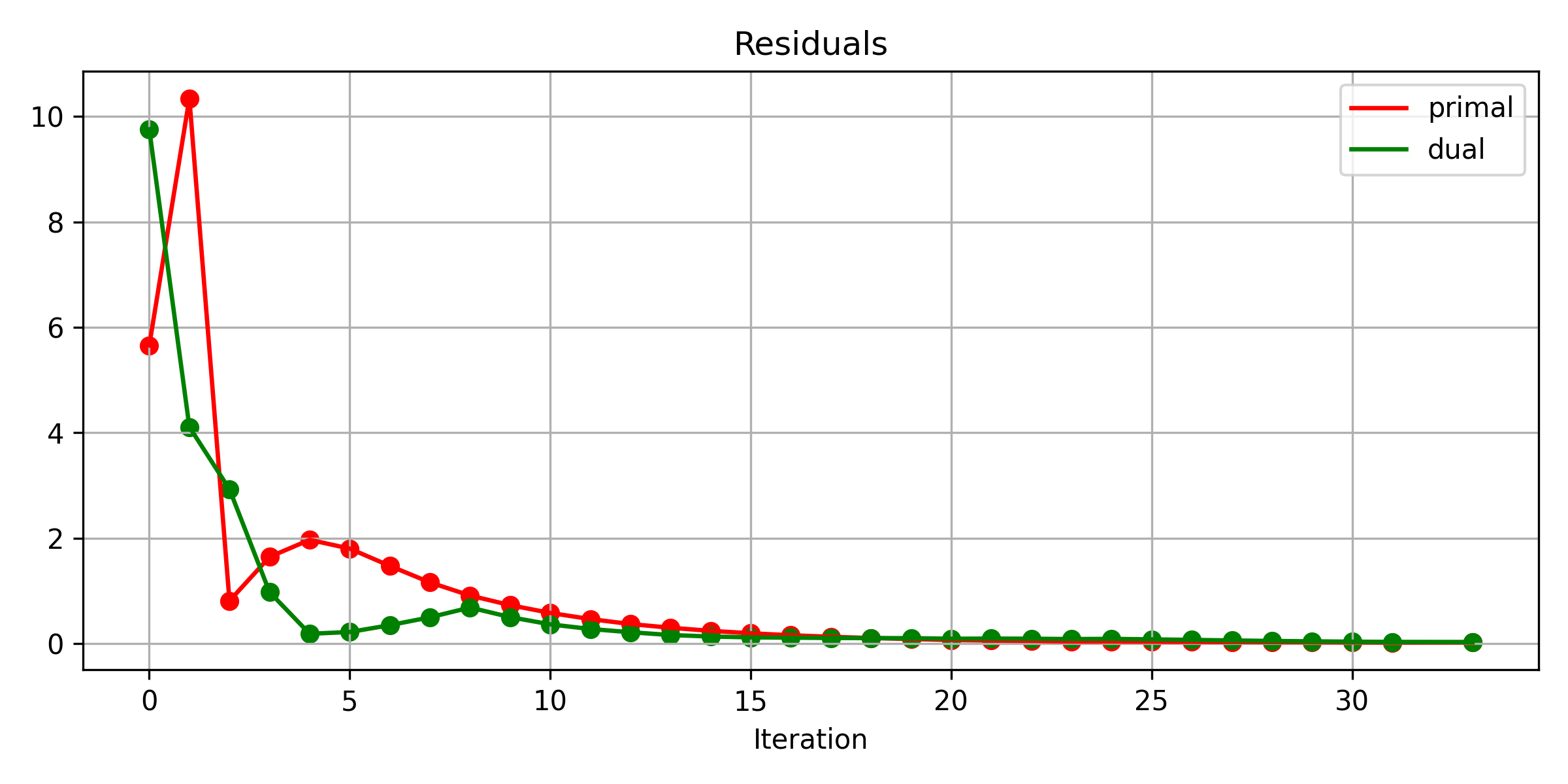}
    \caption{Primal and dual residuals over iterations for OS-SCP.}
    \label{fig:p_d_residuals}
\end{figure}

\begin{table}
\centering
\caption{Numerical Comparison - Unicycle Trajectory}
\begin{tabular}{ p{2cm} p{1.5cm} p{1.5cm}  }
 \toprule
 Method & Cost & Iterations\\
 \midrule 
 Standard SCP  & 0.155 & 36 \\
 OS-SCP  & 0.155 & 33 \\
 \bottomrule
\end{tabular}
\label{table:uni_admm}
\end{table}

\subsection{Unicycle Trajectory Optimization with Gaussian Terrain Fields}
The second problem extends the first example. Consider the same kinematic model and problem setup. However, we add spacial preference biases via a Gaussian terrain field. The cost map created by the terrain field is expressed as
\begin{align}\label{eq:terrain_cost}
J_{\rm map}(r) \;=\; \sum_{\ell=1}^L \exp\!\Big(-\tfrac12 (r-\mu_\ell)^\top \Sigma_\ell^{-1}(r-\mu_\ell)\Big),
\end{align}
where $\mu_\ell \in \mathbb{R}^2$ is the position of the center of the $\ell$-th Gaussian field, and $\Sigma_\ell \succeq 0$ controls the shape of the field and its amplitude. At the $j^\mrm{th}$ iteration, the cost map in \eqref{eq:terrain_cost} is linearized about the solution to the $(j-1)^\mrm{th}$ subproblem, $\pvar^{\itervar}$, and added to the linearized cost function in \eqref{eq:uni_cost}. Then, SCP solves the same unconstrained convex subproblem as in \eqref{eq:cvx_ocp}, but with updated $\Gamma(\pvar^{\itervar}, \pvar)$ to include the Gaussian terrain field cost.

The new problem setup is shown in Figure~\ref{fig:uni_landscape_gauss}, where Gaussian cost fields are added between the upper and lower corridors of the obstacles. The terrain incentivizes traveling through the lower corridor by giving it negative cost, and penalizes the upper corridor with a higher cost terrain. 

\begin{figure}
    \centering
    \includegraphics[width=1.0\linewidth]{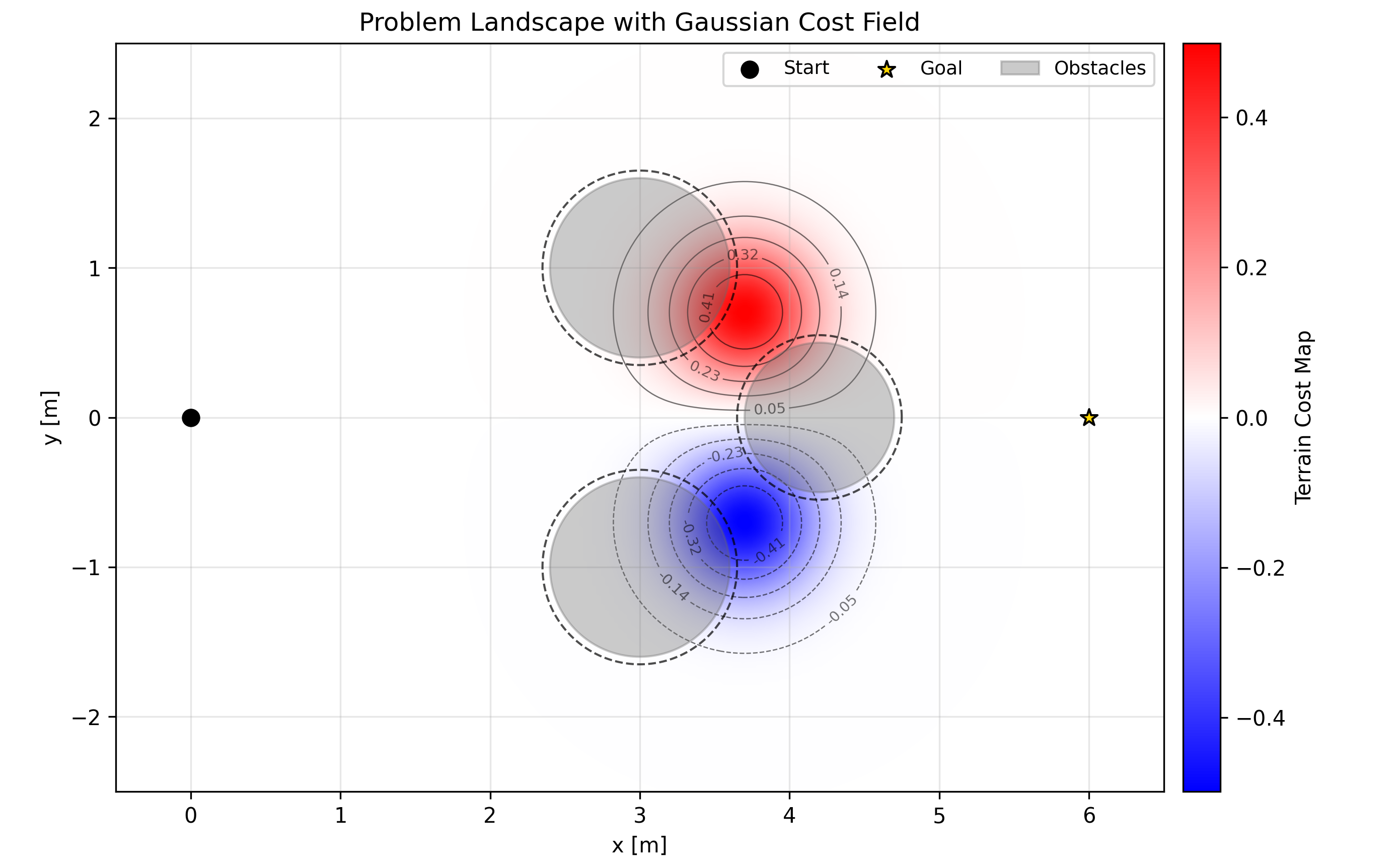}
    \caption{Problem setup for trajectory optimization problem with Gaussian terrain field.}
    \label{fig:uni_landscape_gauss}
\end{figure}

\subsubsection{SCP Solution}
As with the previous example, we first solve the problem with the standard SCP method outlined in Algorithm~\ref{alg:scp}. We initialize three independent standard SCP solves with the same three initial guesses as from the first example. Figure~\ref{fig:uni_scp_gauss_og} shows that each initialization results in a converged trajectory in the neighborhood of the initial guess. Notably, the ``straight" guess gets trapped in a local minimum in the upper corridor. In order for this method to find a minimum cost solution, we need to add a new initial guess close to the global optimum which passes directly through the lower corridor. This solution is depicted in Figure~\ref{fig:uni_scp_gauss}. The only initialization to converge to the lowest cost trajectory is the one whose initial guess passes through the optimal region. This further demonstrates the sensitivity of the standard SCP algorithm to the initial guess. Again, for this visually intuitive example choosing an initial guess near the global optimum is possible. However, for systems in higher dimensions with more complex solution spaces, an initial guess close to the global optimum is generally difficult to generate.

\subsubsection{OS-SCP Solution}
We then solve the problem with OS-SCP, where we use 3 agents with the same initial guesses as in the original example: ``straight", ``lower", and ``upper". The agents explore the solution space before forming a consensus through the lower corridor, successfully finding the lowest cost solution, as shown in Figure~\ref{fig:uni_admm_gauss}, without the need for an initial guess that passes through the lower corridor.  Table~\ref{table:uni_gauss_compare} compares the OS-SCP method against the lowest cost solution from standard SCP. The OS-SCP consensus converges to the same trajectory as the lowest cost solution from the standard SCP solves in near the same number of iterations, but does not have the same need for an initial guess near the global optimum. This example demonstrates the exploratory nature of the OS-SCP method and its lack of dependency on an accurate initial guess. Compared to the standard SCP approach, OS-SCP also provides the benefit of removing the human-in-the-loop requirement of selecting the best trajectory from a set of converged trajectories in the case of multiple unique trajectories with equivalent costs. 

\begin{figure}
    \centering
    \includegraphics[width=1.1\linewidth]{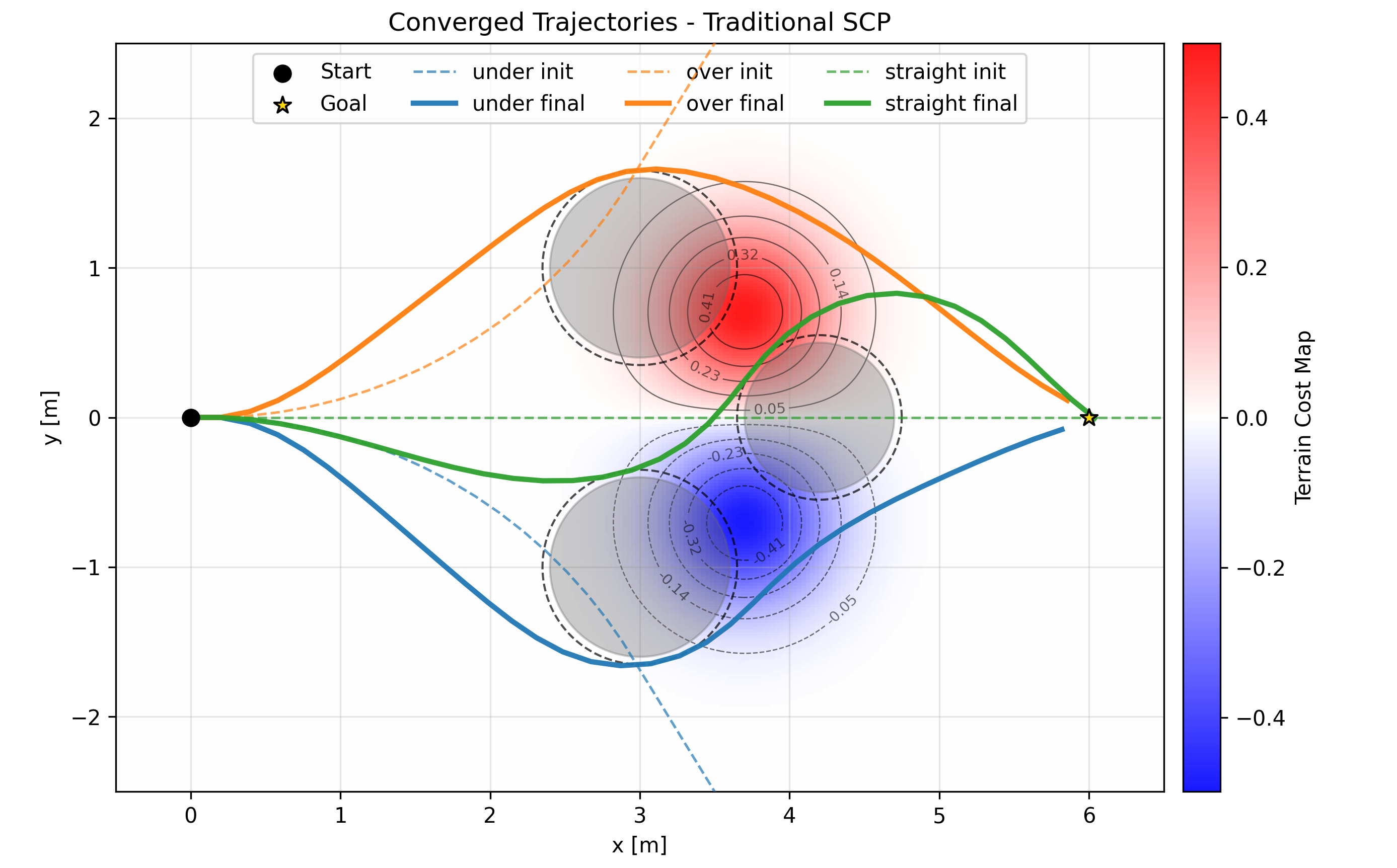}
    \caption{Initial and converged trajectories with Gaussian terrain field cost computed using standard SCP algorithm.}
    \label{fig:uni_scp_gauss_og}
\end{figure}

\begin{figure}
    \centering
    \includegraphics[width=1.1\linewidth]{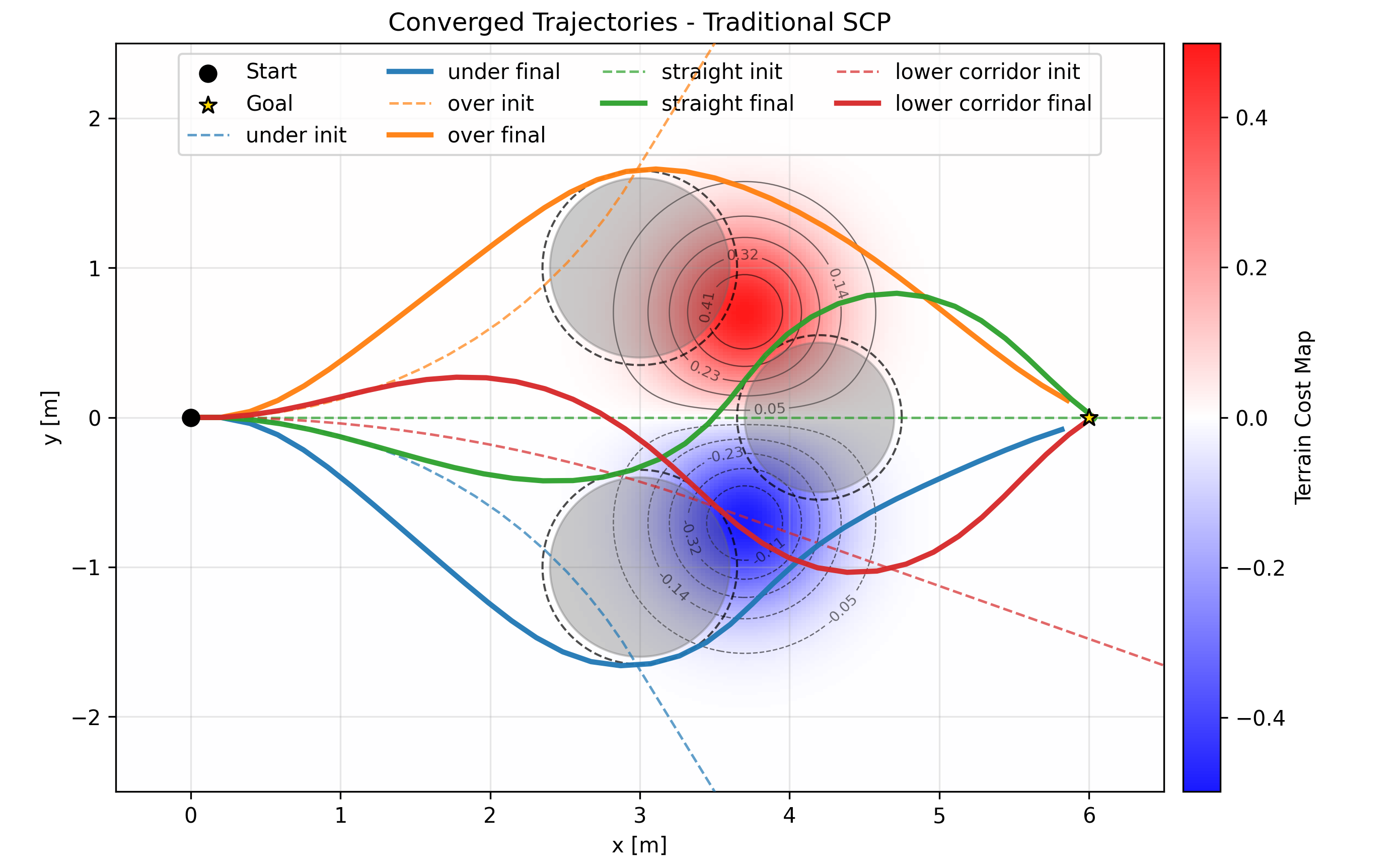}
    \caption{Initial and converged trajectories with Gaussian terrain field cost computed using standard SCP algorithm, including additional ``lower corridor" guess.}
    \label{fig:uni_scp_gauss}
\end{figure}

\begin{figure}
    \centering
    \includegraphics[width=1.1\linewidth]{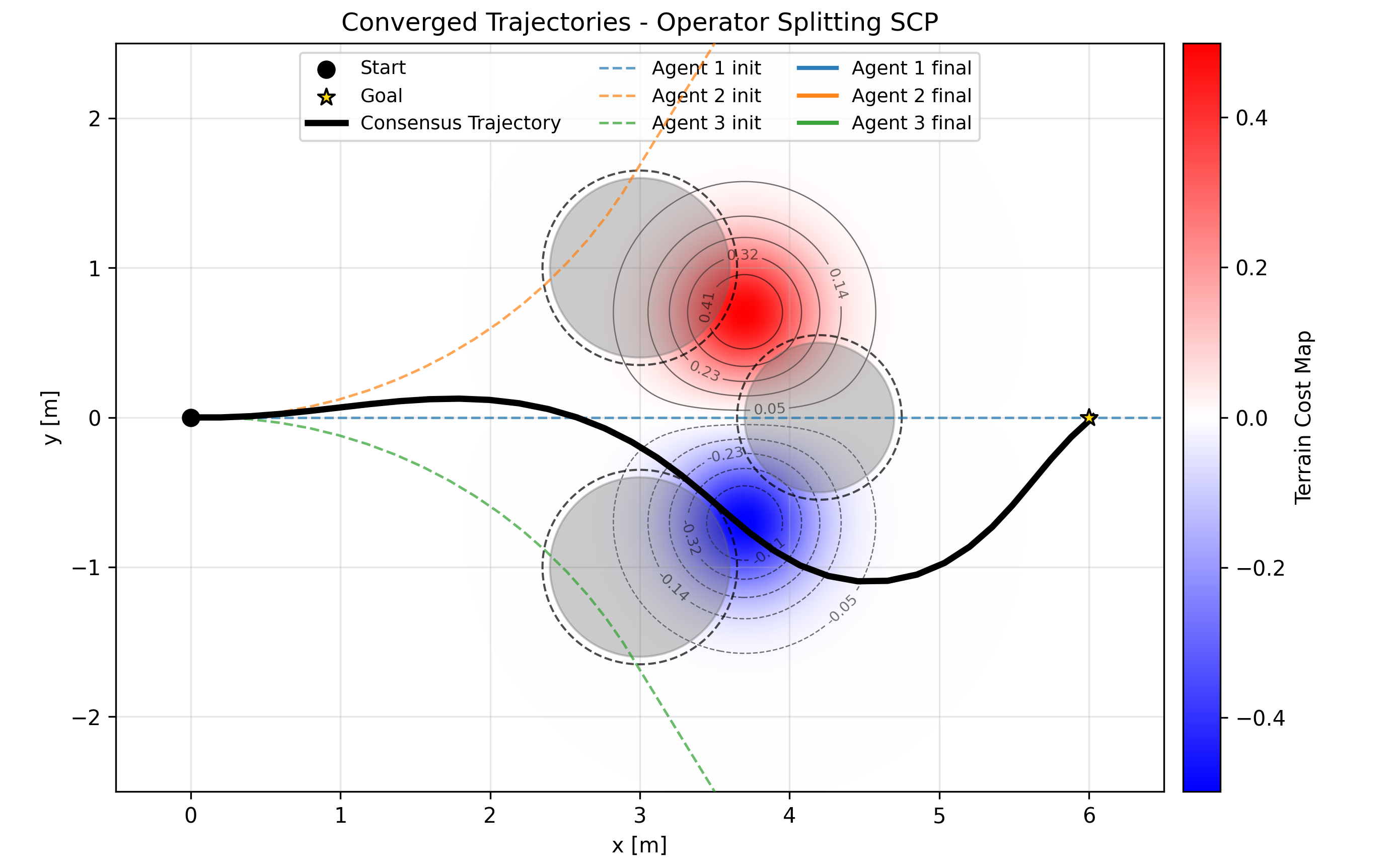}
    \caption{Initial and converged trajectories with Gaussian terrain field cost computed using OS-SCP algorithm.}
    \label{fig:uni_admm_gauss}
\end{figure}

\begin{table}
\centering
\caption{Numerical Results - Unicycle Trajectory with Gaussian Terrain Field}
\begin{tabular}{ p{3cm} p{1.5cm} p{1.5cm}  }
 \toprule
 Method & Cost & Iterations\\
 \midrule 
 Standard SCP (3 guesses) & -0.136 & 28 \\
 Standard SCP (4 guesses) & -0.715 & 31 \\
 OS-SCP  & -0.715 & 33 \\
 \bottomrule
\end{tabular}
\label{table:uni_gauss_compare}
\end{table}

\section{Conclusion} \label{sec:conclusion}

This paper introduces the OS-SCP framework that promotes exploration in nonconvex trajectory optimization problems while preserving the feasibility and structure of standard SCP. Rather than solving a single locally convexified subproblem from one initialization, OS-SCP instantiates multiple agents with diverse initial guesses and couples them through a consensus ADMM update. This mechanism allows the population of agents to search not only the local minima about each initial guess, but also between local minima in the cost landscape. Results show that this can lead to OS-SCP finding lower-cost solutions than the standard SCP algorithm initialized with the same set of initial guesses.


\bibliographystyle{IEEEtran}

\bibliography{references}

\end{document}